\definecolor{darkred}{RGB}{200,0,0}
\definecolor{darkblue}{RGB}{0,0,200}
\definecolor{darkgreen}{RGB}{0,150,0}
\newtheorem{algorithm}{Algorithm}
\newtheorem{convention}{Convention}
\newcommand{\TODO}[1]{}
\newcommand{\CCC}{\texorpdfstring{\lstinline{C3}}{C3}\xspace}
\newcommand{\CCCmerge}{\lstinline{C3_merge}\xspace}
\newcommand{\CCCinstrumented}{\lstinline{C3_instrumented}\xspace}
\newcommand{\red}[1]{{\color{darkred}{#1}}}
\journalname{Order}
\begin{document}

\title{Controlling the \CCC super class linearization algorithm for
  large hierarchies of classes}

\titlerunning{Controlling the \CCC super class linearization algorithm}        

\author{Florent Hivert  \and Nicolas M. Thiéry}

\institute{
  Université Paris-Saclay, CNRS, CentraleSupélec, Laboratoire Interdisciplinaire des Sciences du Numérique, 91190 Gif-sur-Yvette, France \\
  \email{Florent.Hivert@universite-paris-saclay.fr} \\
  \email{Nicolas.Thiery@universite-paris-saclay.fr}           
}

\date{}

\maketitle

\begin{abstract}

  \CCC is an algorithm used by several widely used
  programming languages such as Python to support multiple
  inheritance in object oriented programming (OOP): for each class,
  \CCC computes recursively a linear extension of the poset
  of all its super classes (the Method Resolution Order, MRO) from
  user-provided local information (an ordering of the direct super
  classes). This algorithm can fail if the local information is not
  consistent.

  \TODO{Maybe insists here in one sentences the difference input
    vs. poset}

  For large hierarchies of classes, as encountered when modeling
  hierarchies of concepts from abstract algebra in the SageMath
  computational system, maintaining consistent local information by
  hand does not scale and leads to unpredictable \CCC
  failures.

  This paper reports on the authors' work to analyze and circumvent
  this maintenance nightmare. First, we discovered through extensive
  computer exploration that there exists posets admitting no
  consistent local information; we exhibit the smallest one which has
  10 elements. Then, we provide and analyze an algorithm that, given a
  poset and a linear extension, automatically builds local information
  for \CCC in such a way that guarantees that it will never
  fail, at the price of a slight relaxation of the hypotheses. This
  algorithm has been used in production in SageMath since 2013.

\keywords{class hierarchies; multiple inheritance; linearization; C3}
\subclass{06A06 \and 68N15}
\end{abstract}

\section{Introduction}
\label{intro}

To assume no programming experience from the reader, we start by
briefly recalling the basics of object oriented programming required
to understand the motivations. To make it concrete, we illustrate this
paper with examples in the Python programming language; up to
syntactic details that may be ignored, they should be
self-explanatory. Then, we describe the \CCC algorithm, introduce our
use case, state the problem at hand, and announce the work reported on
and the structure of the paper.

Readers that wish to quickly grasp the mathematical problem at hand
may skip the motivations and jump directly to
Section~\ref{section.formal_background}.

\subsection{Classes and multiple inheritance}

In a programming language supporting object oriented programming, the
programmer can implement \emph{classes} which defines the \emph{data
  structure} and \emph{operations} for all objects of a given nature.

Here is an example of a definition of a class called $A$:
\begin{lstlisting}
  >>> class A:
  ...    x: int
  ...    def f(self): return 42 + self.x
\end{lstlisting}
From this class, we may create any number of \emph{instances}; each
has a data structure made of an integer \lstinline{x}, and a
\emph{method} \lstinline{f}:
\begin{lstlisting}
  >>> a = A()             # Create an instance a of A
  >>> a.x = 2             # Set the value of x for a
  >>> a.f()               # Call the method f for a
  44
\end{lstlisting}

Classes can be combined by inheritance: in the following example, the
class $C$ is a subclass of both $A$ and $B$:
\begin{lstlisting}
  >>> class A:
  ...    def f(self): return "Calling f from A"
  >>> class B:
  ...    def g(self): return "Calling g from B"
  >>> class C(A, B):
  ...    def h(self): return "Calling h from C"
\end{lstlisting}
Thereby, each instance of $C$ \emph{inherits} the methods
\lstinline{f}, \lstinline{g}, and \lstinline{h} provided respectively
by the classes $A$, $B$, and $C$:
\begin{lstlisting}
  >>> c = C()
  >>> c.f()
  Calling f from A
  >>> c.g()
  Calling g from B
  >>> c.h()
  Calling h from C
\end{lstlisting}
As we are about to see, the order in which the direct super classes of
$C$ are listed when we write $C(A, B)$ \emph{is} relevant; it is
called the \emph{local precedence order} at $C$.

\subsection{Method Resolution Orders}

Now what happens if several classes define methods with the same name?
\begin{lstlisting}
  >>> class A:
  ...    def f(self): return "Calling f from A"
  >>> class B:
  ...    def f(self): return "Calling f from B"
  >>> class C(A, B):
  ...    def f(self): return "Calling f from C"
\end{lstlisting}
Now the following call becomes a priori ambiguous and the system must
first \emph{resolve} that ambiguity:
\begin{lstlisting}
  >>> c = C()
  >>> c.f()            # Calls f from A, B, or C?
  Calling f from C
\end{lstlisting}

Over the years, programming languages have explored various method
resolution strategies, some requiring manual resolution by the
programmer (as in C++), others automating the process, culminating
with the \CCC algorithm~\cite{C3.1996}; Python's founder Guido van
Rossum presents it as follows in~\cite{vanRossum.2010.MRO}:
``Basically, the idea behind \CCC is that if you write down all of the
ordering rules imposed by inheritance relationships in a complex class
hierarchy, the algorithm will determine a monotonic ordering of the
classes that satisfies all of them. If such an ordering cannot be
determined, the algorithm will fail.``.

Let's briefly review the history and rationale behind \CCC. In
\cite{C3.1994}, the authors advocated that a guiding principle for
automated resolution should be that of least surprise for the
programmer: up to exceptional cases, she should not have to reason on
the technical details of the method resolution strategy to predict the
behaviour of a program.

A first step is to ensure that the resolution does not depend on the
specific method at hand. Typically, for each class $C$, a Method
Resolution Order (MRO, or linearization) is computed: this is a linear
order starting with $C$ followed by all its super classes. Then, a
method call \lstinline{c.f()} for an instance \lstinline{c} of $C$ is
resolved by searching for the first class in the MRO that defines a
method with that name.

A comparative review of popular linearization algorithms is conducted
in~\cite{C3.1992} (see also~\cite{C3.1996}). In \cite{C3.1994}, the
authors propose desirable properties for MROs to support the principle
of least surprise.

Three of the desirable properties are for the MROs themselves; all
three are about consistency with the constraints laid out by the
programmer:
\begin{itemize}
\item \emph{Consistency with the hierarchy of classes} (called Masking
  in~\cite{C3.1992}): a class $C$ in the hierarchy \emph{specializes}
  its super classes: if $C$ defines a method \lstinline{f}, then this
  method should take precedence over methods defined in its super
  classes. In other words, the MRO should be a linear extension of the
  hierarchy of super classes.

  When there is no multiple inheritance, this fully specifies the
  MRO. Note that many early MRO computation algorithms produced MROs
  that did not respect this consistency (e.g. in Python $<$ 2.3, Perl,
  MuPAD, ...).
\item \emph{Consistency with the local precedence order}: assume that
  $B$ appears before $A$ in the local precedence order of some class
  $C$ in the hierarchy; then $B$ must appear before $A$ in the MRO of
  $C$.
\item \emph{Consistency with the extended precedence graph}: this a
  strengthening of the previous property: under the same assumption,
  not only must $B$ appear before $A$ in the MRO, but also all its
  super classes that are not also super classes of $A$.
\end{itemize}

In addition, \cite{C3.1994} propose two additional desirable
properties for linearization algorithms:
\begin{itemize}
\item \emph{Monotonicity}: the MRO of a super class $A$ of a class $C$
  should be the restriction of the MRO of $C$ to the super classes of
  $A$.
\item \emph{Acceptability}: the MRO of a class should only depend on
  the subhierarchy of its super classes considered up to isomorphism,
  together with the local precedence orders for each super class. The
  computation should thereby not depend on external factors, nor on,
  e.g. class names.
\end{itemize}

\subsection{The \CCC algorithm}

In \cite{C3.1996}, the authors invented the \CCC algorithm by
cross-breeding the linearization algorithm of the Dylan programming
language (originating from the Common List Object System) and that
proposed in~\cite{C3.1994}. The name comes from the fact that \CCC
respects the three consistency properties, and in fact all the
aforementioned desirable properties. \CCC became the standard
linearization algorithm of Dylan, and then was adopted in other widely
used languages, such as Python $\geq$2.3, Perl 5.10, Raku, Parrot,
Solidity, PGF/Tikz~\cite{Wikipedia.C3}. We recommend this
article~\cite{Simionato.2003.C3} from the Python documentation, which
contains many details and examples.

In practice, \CCC is based on a routine \CCCmerge{}
-- similar in nature to the merge step in the merge sort algorithm:
\begin{algorithm}
  \label{algorithm.C3_merge}
\CCCmerge takes several duplicate-free lists as input and
merges them together, removing duplicates and preserving the linear
orders prescribed by the input lists, or failing if these are
inconsistent. Specifically, call \emph{head} the first element of a list,
and \emph{tail} the rest of it.
The head of an input list is \emph{good} if it does not appear in the
tail of any of the other input lists. \CCCmerge starts from
an empty result and searches for a good head, starting from the first
input list. This good head is appended
to the result and removed from the input lists. Then that step is
repeated until all the input lists are empty or there is no good head.
The latter case certifies that the linear orders
prescribed by the lists are inconsistent, and \CCCmerge
fails (in Python with the dreaded ``could not find a consistent method
resolution order'' exception).
\end{algorithm}

\begin{algorithm}
  \label{algorithm.C3}
  The \CCC linearization algorithm computes the MRO of a class $C$ by
  calling \CCCmerge on the MROs of its direct super classes, computed
  recursively, and followed by the local precedence order at $C$.
\end{algorithm}

Acceptability, monotonicity and consistency with the class hierarchy
and local precedence orders are given by construction. Consistency
with the extended precedence graph takes a proof.

The MRO of a class is typically cached. Thereby a natural complexity
metric is the cost of computing the MRO of a newly added class. It is
linear in the sum of the lengths of the MROs of its direct
super-classes; typically, there are very few of them, so this is
essentially linear in the length of the MRO.

The total cost of computing the MRO's for all classes in a hierarchy
is typically between linear and quadratic in the number of classes,
depending on the depth of the hierarchy. Given that the computation
occurs only once and that class hierarchies rarely exceed hundreds of
classes, this is not a bottleneck in practice. That being said, and
this will be relevant in the sequel, the worst case complexity is
cubic with a badly set up class hierarchy.

\subsection{The use case: computational mathematics with SageMath}

We now turn to the use case in computational mathematical systems that
triggered our work. Abstract algebra provides a large range of
concepts (e.g. fields, vector spaces or groups, for some of the most
well-known) which are effective: for example, being a vector space
brings a range of generic algorithms from linear algebra. Modeling
these concepts in a computational mathematics systems is a tool of
choice to structure code, documentation, and even tests to maximize
reusability.

\begin{figure}[h]
  \includegraphics[width=1.2\textwidth]{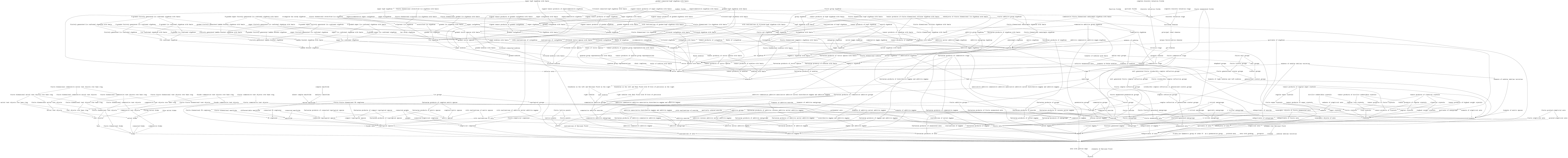}
  \caption{The hierarchy of about 400 concepts modeled in SageMath as of 2022}
  \label{fig.sage-category-graph}
\end{figure}

Following e.g. Axiom, Aldor or MuPAD, and consistently with the choice
of the programming language Python, SageMath has made the decision to
model these concepts using a hierarchy of classes (other systems, like
GAP, have taken different approaches).

So far, this follows the classical pattern of modeling the business
logic to increase expressiveness. A striking aspect of algebra however
is that there are relatively few core concepts (operations such as
addition and multiplication, axioms like associativity,
distributivity, ...)  and that all the richness comes from the many
interactions between these concepts, with a plethora of interesting
combinations that deliver dedicated computational methods. As a
programmer, you just want to state which core concepts are satisfied
by your object at hand, and automatically benefit from all the methods
provided by their various combinations.

In SageMath, this makes for a deep hierarchy of about one thousand
abstract classes -- each bringing some non-trivial content --
involving massive multiple inheritance (see
Figure~\ref{fig.sage-category-graph}). Automation is key to keep such
a hierarchy maintainable. There is indeed a lot of redundancy in the
inheritance diagram, as it encodes, for example, that a finite group
is a finite magma, an element of a group is an element of a magma, a
group morphism is a magma morphism, and so on and so forth. This
maintenance issue and similar ones are mitigated by generating the
class hierarchy at runtime from mixins and semantic information that
model the relevant mathematical fact at a single point of truth; in
the above example: a group is a magma. See~\cite{SageCategories} for
more details about the underlying SageMath's \emph{category
  infrastructure}.

\subsection{The problem}

While developing SageMath's hierarchy of classes and designing the
underlying category infrastructure, the second author was constantly
hitting or receiving reports of \CCC failures. In their simplest form,
they were triggered by classes such as in the following synthetic
example:
\begin{center}
  \begin{minipage}{.4\textwidth}
    \begin{center}
      \fbox{\begin{tikzpicture}[yscale=-1]
          \node (A) at (0, 0) {class $A$};
          \node (B) at (2, 0) {class $B$};
          \node (C) at (1, 1) {class $C(A,B)$};
          \draw[->] (C) -> (A);
          \draw[->] (C) -> (B);
        \end{tikzpicture}}\\
      MRO: $C, A, B$
    \end{center}
  \end{minipage}
  \qquad
  \begin{minipage}{.4\textwidth}%
    \begin{center}
      \fbox{\begin{tikzpicture}[yscale=-1]
          \node (A) at (0, 0) {class $B$};
          \node (B) at (2, 0) {class $A$};
          \node (D) at (1, 1) {class $D(B,A)$};
          \draw[->] (D) -> (A);
          \draw[->] (D) -> (B);
        \end{tikzpicture}}\\
      MRO: $D, B, A$
    \end{center}
  \end{minipage}
\end{center}
These two classes independently are perfectly sane and sound. However,
when combined as in
\begin{center}
  \fbox{class $E(C, D)$}
\end{center}
\CCC rightfully reports that it “Cannot create a consistent method
resolution order (MRO)”: the order of $A$ and $B$ in the MROs of $C$ and $D$
are indeed inconsistent. See also~\cite[Figure 4]{C3.1994}
which exhibits another example where the inconsistency is indirect.

This was to be expected: in large class hierarchies, developed by
dozens of independent developers, each with a given use case in mind,
you can't expect local decisions to be consistent globally without
choosing and enforcing some conventions.

This was the occasion for formalizing a convention that was emerging,
prompted by the observation that, at the scale of SageMath's abstract
classes, it is illusory to get a fine control on the MRO, beyond
monotonicity.
\begin{convention}
  \label{convention}
All the methods that one could inherit through different inheritance
paths should be semantically equivalent, and only possibly differ by
their efficiency. Whenever a specific choice is desirable (e.g. for a
$C$, it is more efficient to call \lstinline{f} from
$A$ than \lstinline{f} from $B$), that choice
should be implemented and documented explicitly using a method
\lstinline{f} in $C$.
\end{convention}

Based on this convention, the next attempt was to try to formalize
some global order on the abstract classes, and enforce that the local
precedence order be consistent with that global order. This revived an
old tension between local and global approaches (see discussion p.174
of~\cite{C3.1994}). And indeed \CCC{} -- being by design not aware
of non-local information (acceptability) -- occasionally produces MROs
that do not respect the global order as in the upcoming example.
\begin{example}
  \label{example.C3deviates}
  For the following class hierarchy, where the local precedence order
  have been chosen according to the global order
  $E,D,C,B,A$, the algorithm \CCC produces the MRO
  $E,D,B,A,C$.
  \begin{center}
    \fbox{\begin{tikzpicture}[yscale=-1]
        \node (A) at (0, 0) {class $B$};
        \node (B) at (2, 0) {class $A$};
        \node (C) at (4, 0) {class $C$};
        \node (D) at (0, 1) {class $D(B,A)$};
        \node (E) at (0, 2) {class $E(D,C)$};
        \draw[->] (D) -> (B);
        \draw[->] (D) -> (A);
        \draw[->] (E) -> (D);
        \draw[->] (E) -> (C);
      \end{tikzpicture}}
  \end{center}
\end{example}
Again, this attempt did not scale; whichever global order was tried,
inconsistencies emerged sooner or later when extending the hierarchy.

This problem is particularly acute in SageMath because extensibility
is at the core of the development model: by design, the SageMath
library is meant as a toolbox from which end users can create bespoke
classes to model their favorite newly invented mathematical
objects. Manually resolving MRO issues -- which may require tweaking
the local precedence orders in the SageMath library itself -- imposes
an insurmountable barrier.

\subsection{Formal background}
\label{section.formal_background}

In this section, we briefly formally recap the main definitions and results
from the literature in the language of order theory.

Let $P$ be a poset. An \emph{MRO} of $P$ is a total ordering of its elements.
A \emph{local precedence order} at an element $C$ of the poset is a total
ordering of the upper covers of $C$. Choose a local precedence order $l(C)$ at
each element $C$ of $P$. An MRO $l$ is \emph{consistent} with $P$ and the
local precedence orders if
\begin{enumerate}
\item $l$ is a linear extension of $P$;
\item\label{compat_local} for any element $C$ of $P$, any two upper covers $B<_{l(C)} A$
  of $C$ and any $B'$ such that $B'>B$ but $B'\not > A$, we have
  $B'<_l A$.
\end{enumerate}
Note: condition \ref{compat_local} as defined above deviates slightly from the
original condition of \emph{consistency with the extended local precedence
  orders} as stated in \cite{C3.1994} or~\cite{C3.1996}; nevertheless, the two
conditions are equivalent as soon as $l$ is a linear extension of the poset $P$.

\begin{proposition}[See paragraph just before Result 2, p. 22 of~\cite{C3.1992}]
  Let $P$ be a poset endowed with a local precedence order $l(C)$ at
  each element $C$ of $P$. Then, $(P, \{l(C) \mid C\in P\})$ admits at most one
  consistent MRO.
\end{proposition}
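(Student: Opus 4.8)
Here is how I would approach it.

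The plan is a direct argument by contradiction. Suppose $(P,\{l(C)\})$ admitted two \emph{distinct} consistent MROs $l$ and $l'$. Both are total orders on the finite set $P$, so there is a least index $k$ at which they disagree; write $D=\{l[1],\dots,l[k-1]\}=\{l'[1],\dots,l'[k-1]\}$ for their common prefix and set $Y=l[k]$, $Y'=l'[k]$, so $Y\neq Y'$. Since $l$ and $l'$ are linear extensions of $P$, the initial segment $D$ is a down-set and $Y$ (resp.\ $Y'$) is a minimal element of $P\setminus D$. Moreover $Y<_l Y'$ (the element $Y'$ is none of $l[1],\dots,l[k]$, hence sits after position $k$ in $l$) and symmetrically $Y'<_{l'}Y$; in particular $Y$ and $Y'$ are incomparable in $P$ (if, say, $Y<Y'$ then $Y<_{l'}Y'$, contradicting $Y'<_{l'}Y$). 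The whole proof then amounts to contradicting the pair of facts $Y<_l Y'$ and $Y'<_{l'}Y$.

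First I would localize the situation around $Y$ and $Y'$. Here the argument uses that $P$ has a least element (which it does in the motivating setting, $P$ being the poset of all super-classes of a given class; this is genuinely needed, since a two-element antichain with its trivial local precedence orders admits two consistent MROs). Then the common lower bounds of $Y$ and $Y'$ form a non-empty set, and I pick a \emph{maximal} one, $m$; as $Y$ and $Y'$ are incomparable, $m<Y$ and $m<Y'$. Climbing a saturated chain from $m$ up to $Y$ produces an upper cover $z$ of $m$ with $z\leq Y$, and likewise an upper cover $z'$ of $m$ with $z'\leq Y'$. By maximality of $m$, neither $z\leq Y'$ nor $z'\leq Y$ holds; hence $z\neq z'$, and since $l(m)$ totally orders the upper covers of $m$, it compares $z$ and $z'$.

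The key step is to feed this configuration into the consistency of $l$ and of $l'$ at $m$. Say $z<_{l(m)}z'$; the opposite case is obtained by swapping $(Y,l)\leftrightarrow(Y',l')$. By consistency with the extended local precedence order at $m$ — which, by the remark following condition~\ref{compat_local}, holds for any consistent MRO — the cover $z$ together with every element that is $\geq z$ but $\not\geq z'$ must precede $z'$. Since $Y\geq z$ and $Y\not\geq z'$ (because $z'\not\leq Y$), this gives $Y<_l z'$ and $Y<_{l'}z'$. But $z'\leq Y'$ and $l'$ is a linear extension, so $Y<_{l'}z'\leq_{l'}Y'$, i.e.\ $Y<_{l'}Y'$, contradicting $Y'<_{l'}Y$. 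In the symmetric case one gets $Y'<_l z\leq_l Y$, i.e.\ $Y'<_l Y$, contradicting $Y<_l Y'$. Either way we reach a contradiction, so no two distinct consistent MROs can exist.

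I expect the only real obstacle to be the middle step: choosing the correct pivot — a \emph{maximal} common lower bound $m$ of $Y$ and $Y'$, not merely the least element of $P$ — and checking that the two covers it produces really do straddle the $Y/Y'$ divide ($z\leq Y$, $z\not\leq Y'$ and vice versa), since this is exactly what makes the extended-precedence condition applicable with $Y$ (resp.\ $Y'$) as the witness element. Everything else is routine unwinding of the notions of ``linear extension'' and of ``consistent MRO'' (together with the remark identifying condition~\ref{compat_local} with the original extended-precedence consistency on linear extensions). It is also worth stating explicitly where a least element of $P$ is used, as without a common lower bound for $Y$ and $Y'$ the statement is false.
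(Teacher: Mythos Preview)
Your argument is correct. Note that the paper does not supply its own proof of this proposition --- it is quoted from~\cite{C3.1992} and stated without proof --- so there is nothing to compare against; your route (pick a point of disagreement between two putative consistent MROs, pass to a \emph{maximal} common lower bound $m$, and let the extended-precedence condition at $m$ force the same relative order in both) is the standard one.

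Two minor remarks. First, the ``first disagreement'' bookkeeping is not needed: any pair with $Y<_l Y'$ and $Y'<_{l'}Y$ (which exists as soon as $l\neq l'$) is automatically incomparable in $P$, since both $l$ and $l'$ are linear extensions; minimality in $P\setminus D$ is never used beyond that. Second, you are right to flag the hypothesis of a least element (equivalently, that any two elements have a common lower bound): the statement is false without it, and the paper's formulation tacitly relies on the ambient setting where $P$ is the poset of super classes of a fixed class. Your invocation of ``$\geq z$'' rather than ``$>z$'' when applying condition~\ref{compat_local} is also appropriate: the case $Y=z$ is precisely consistency with the local precedence order itself, which --- per the paper's remark following that condition --- is subsumed once $l$ is a linear extension.
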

When $(P, \{l(C) \mid C\in P\})$ admits no consistent MRO, the local precedence
orders $\{l(C) \mid C\in P\}$ are called \CCC-\emph{inconsistent}.

\begin{proposition}[See Result 2, p. 22 of~\cite{C3.1992}]
  Let $P$ be a poset endowed with a local precedence order $l(C)$ at each
  element $C$ of $P$. Then, the algorithm \CCC applied to
  $(P, \{l(C) \mid C\in P\})$ computes the unique consistent MRO if it exists,
  and fails otherwise.

  Moreover, the \CCC algorithm is acceptable and monotonic.
\end{proposition}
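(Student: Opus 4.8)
I would argue by induction on $|P|$, writing $\bot$ for the least element of $P$ (the element whose linearization is at stake), $A_1<_{l(\bot)}\cdots<_{l(\bot)}A_k$ for its upper covers, and $U_A:=\{x\in P\mid x\ge A\}$ for the upset of $A$; each $U_{A_i}$ is then a strictly smaller poset with least element $A_i$, and $\mathrm{MRO}(\bot)=[\bot]+M$, where $M$ is the output of \CCCmerge{} applied to $\mathrm{MRO}(A_1),\dots,\mathrm{MRO}(A_k)$ followed by the list $l(\bot)=[A_1,\dots,A_k]$. The auxiliary fact I would record first is the behaviour of \CCCmerge{} on duplicate-free lists $L_1,\dots,L_m$: at each step it emits a \emph{source} of the digraph whose arcs are the ``appears-earlier-in-some-$L_i$'' pairs among the surviving elements, so that (i) when it terminates its output is a duplicate-free common supersequence of the $L_i$ --- same underlying set $\bigcup_i L_i$, and restricting to the elements of $L_i$ gives back $L_i$ --- and (ii) since deleting an element only deletes arcs, \CCCmerge{} fails exactly when that digraph has a cycle, i.e.\ exactly when the $L_i$ admit no common supersequence at all. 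Equivalence (ii) is the bridge between ``\CCC{} succeeds'' and ``a consistent MRO exists'', and (i) applied inductively gives that the underlying set of any \CCC{} output is the whole poset it is run on.

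Acceptability is then immediate: unfolding the recursion, $\mathrm{MRO}(\bot)$ depends only on the covering relation of $P$ and the orders $l(\cdot)$, so every isomorphism of $(P,\{l(C)\})$ carries the output to the output. For monotonicity, (i) says that $M$ has underlying set $\bigcup_i U_{A_i}=P\setminus\{\bot\}$ and restricts to $\mathrm{MRO}(A_i)$ on $U_{A_i}$ (whose underlying set is exactly $U_{A_i}$); hence $\mathrm{MRO}(\bot)$ restricted to $U_{A_i}$ equals $\mathrm{MRO}(A_i)$, and for a general $A\ge\bot$ one picks an upper cover $A_i\le A$, restricts twice, and applies the induction hypothesis on $U_{A_i}$. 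Since a successful run of \CCC{} at $\bot$ contains a successful run at every $A\ge\bot$ as a subcomputation, this is exactly the monotonicity statement.

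Next, if \CCC{} succeeds at $\bot$ then $\mathrm{MRO}(\bot)$ is consistent. It is a linear extension because $\bot$ is least and any pair $x<y$ in $P\setminus\{\bot\}$ satisfies $x\ge A_i$ and $y\in U_{A_i}$ for some $i$, where $M$ refines the linear extension $\mathrm{MRO}(A_i)$. Consistency with the local precedence order at $\bot$ holds because $l(\bot)$ is one of the lists fed to \CCCmerge{}, which respects its inputs by (i); and for $X\ne\bot$ one has $X\ge A_i$ for some $i$, all upper covers of $X$ lie in $U_{A_i}$, and the induction hypothesis together with (i) puts them in $l(X)$-order in $M$. The remaining point --- condition~\ref{compat_local} at $\bot$, i.e.\ consistency with the extended precedence graph --- is the one the text warns ``takes a proof'', and I expect it to be the main obstacle: the elements it relates need not co-occur in any single list handed to this \CCCmerge{} call, so it cannot be read off from ``\CCCmerge{} respects its inputs'', and instead requires an induction over the steps of the merge that exploits the greedy discipline (``take the first good head, scanning from the first list'') to control which upper covers, and which elements above them, have already been emitted.

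Finally the converse. If $\ell$ is a consistent MRO of $P$ then $\ell|_{U_{A_i}}$ is a consistent MRO of $U_{A_i}$ --- linearity and condition~\ref{compat_local} both restrict to an upset without loss --- so by the induction hypothesis \CCC{} succeeds on $U_{A_i}$ with $\mathrm{MRO}(A_i)=\ell|_{U_{A_i}}$. Then $\ell$ restricted to $P\setminus\{\bot\}$ is a duplicate-free common supersequence of $\mathrm{MRO}(A_1),\dots,\mathrm{MRO}(A_k),l(\bot)$: its underlying set is $P\setminus\{\bot\}$, it refines each $\ell|_{U_{A_i}}$, and it refines $l(\bot)=[A_1,\dots,A_k]$ because $\ell$ is consistent at $\bot$; so by (ii) \CCCmerge{} succeeds, hence \CCC{} succeeds at $\bot$. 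By the previous paragraph its output is then a consistent MRO, so by the uniqueness proposition already proved it equals $\ell$. Conversely, were $P$ to admit no consistent MRO, \CCC{} could not succeed, since the preceding paragraph would otherwise exhibit one --- so \CCC{} fails. This closes the induction and, together with acceptability and monotonicity, proves the proposition.
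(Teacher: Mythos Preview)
The paper does not prove this proposition at all: it is stated as background with a citation to~\cite{C3.1992}, and the only in-paper remark is, earlier in Section~1.3, that ``consistency with the extended precedence graph takes a proof''. So there is no paper argument to compare against; what follows evaluates your proposal on its own.

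Your decomposition is sound and most pieces go through: the description of \CCCmerge{} via sources of the ``appears-earlier'' digraph (your (i) and (ii)) is correct; acceptability and monotonicity follow as you say; the linear-extension property and consistency with $l(X)$ for every $X$ (including condition~\ref{compat_local} at $X\neq\bot$, which you correctly reduce to the inductive hypothesis on $U_{A_i}$) are fine; and the use of (ii) in the converse to show that \CCC{} does not fail when a consistent~$\ell$ exists is clean.

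There is, however, a genuine gap, and it is exactly the one you flag but do not close: condition~\ref{compat_local} at~$\bot$. You write that it ``requires an induction over the steps of the merge that exploits the greedy discipline'', which is correct, but you then proceed as if it were established. The gap is load-bearing in both directions. Your converse ends with ``by the previous paragraph its output is then a consistent MRO, so by the uniqueness proposition\ldots it equals~$\ell$'', which invokes precisely the unproved claim; and your last sentence (``were $P$ to admit no consistent MRO, \CCC{} could not succeed, since the preceding paragraph would otherwise exhibit one'') is literally its contrapositive. So as written, neither half of the equivalence is complete. Concretely, what must be shown is that when \CCCmerge{} emits $A_i$, every element of $U_{A_j}\setminus U_{A_i}$ with $j<i$ has already been emitted; this does not follow from (i) alone, since such elements need not co-occur with $A_i$ in any single input list, and it is here that the ``scan from the first list'' tie-breaking rule is essential. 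Until that step is carried out, the argument is a good outline but not a proof.
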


\subsection{Description of the work}

Wondering whether the repeated \CCC failures encountered in our
application were intrinsic to the problem triggered the following
mathematical question:

\begin{question}
  \label{question.noMRO}
  Does there exist a poset admitting no consistent MRO, whichever
  local precedence orders are chosen?
\end{question}
In Section~\ref{section.noMRO} we describe the
computer exploration that followed and elucidate the question by a
positive answer, exhibiting the smallest example which has 10 elements.

This fact supported our practical assessment that, under usual
practice, \CCC linearization did not scale in the SageMath use case.

Switching to another linearization algorithm was not an option: this
would have required to either use a patched version of Python --
thereby creating a barrier between SageMath and the rest of the Python
ecosystem -- or requesting a change in the linearization algorithm in
Python itself -- nowadays a mature and widely adopted language, with
millions of programmers and billions of lines of code that could get
broken.

So instead we investigated how to gain control over \CCC to force it to
produce MROs satisfying the somewhat different desirable properties
for our use case; this is reported on in
Section~\ref{section.C3_under_control}.

\section{A partial order with no \CCC-consistent local precedence orders}
\label{section.noMRO}

To explore Question~\ref{question.noMRO}, we performed a systematic
computer search on posets of small size. This required, for increasing
$n$, to iterate through all linear extensions of all posets up to
isomorphism with $n+1$ elements and a least element (the
class inheriting from all the others), computing the MRO of this class
with \CCC starting from the local precedence orders induced by the
linear extension, and collecting the success and failure counts per
poset. Without loss of generality, one may choose $0,1,\dots,n$ as
labels and linear extension and drop the least element, and thereby
reduce the enumeration to the collection $\mathcal A_n$ of posets
(stored as transitively reduced digraphs) admitting $1 <\dots<n$ as
linear extension.

\begin{table}[h]
  \begin{center}
    \begin{tabular}{|r|r|r|r|r|r|r|r|r|r|r|}\hline
      n                  & 0 & 1& 2& 3&  4&  5&   6&    7&     8&      9\\\hline
      A000112            & 1 & 1& 2& 5& 16& 63& 318& 2\,045& 16\,999& 183\,231\\\hline
      \# $\mathcal A_n$  & 1 & 1 &2& 7& 40&357&4\,824&96\,428&2\,800\,472&116\,473\,461\\\hline
    \end{tabular}
  \end{center}
  \caption{Number of posets on $n$ elements up to isomorphism (OEIS
    A000112~\cite{OEIS.A000112}) and of posets admitting
    $1,\dots,n$ as linear extension (OEIS
    A006455~\cite{OEIS.A006455}).}
\end{table}

The graded set
$\mathcal A := \biguplus_{n\in \mathbb{N}} \mathcal A_n$ is naturally
endowed with a tree structure: the root is the trivial digraph in
$\mathcal A_0$; the parent of a poset in $\mathcal A_n$ is obtained by
taking in $\mathcal A_{n-1}$ the induced subgraph on
$\{1,\dots,n-1\}$; reciprocally, the children of a poset in
$\mathcal A_{n-1}$ are obtained by taking each of its antichains in
turn, and adding $n$ as a cover of the elements in that antichain.

This tree structure on $\mathcal A$ enables to iterate recursively
through the elements of each $\mathcal A_n$ without storing them, and
use a parallel map-reduce algorithm to apply \CCC on each of them and
collect the desired success and failure counts per poset up to
isomorphism (using canonical labelling).

The computation took two days on the 8 cores of a 2012 personal laptop
for $\mathcal A_9$, that is testing \CCC for all linear extensions of
all posets up to isomorphism on $10$ elements and with a least element.
The computation was performed with SageMath's
parallel infrastructure for map-reduce operations on recursive
enumeration
trees~\cite{Sage.RecursivelyEnumeratedSets.MapReduce}. Implemented by
the first author, this infrastructure is based on work stealing to
achieve load balancing even with branches of very irregular
size~\cite{Hivert.PASCO.2017}. The computation served as use case and
benchmark for this infrastructure while it was developed.

The proposition below summarizes the findings of this computation.
\begin{proposition}
  \label{proposition.noMRO}
  Consider the following poset $H$ on $10$ elements:
  \begin{center}
    \fbox{\begin{tikzpicture}[yscale=-1]
        \node (A) at (0, 0) {A};
        \node (B) at (2, 0) {B};
        \node (C) at (4, 0) {C};
        \node (D1) at (0, 1) {$D_1$};
        \node (E1) at (0, 2) {$E_1$};

        \node (D2) at (2, 1) {$D_2$};
        \node (E2) at (2, 2) {$E_2$};

        \node (D3) at (4, 1) {$D_3$};
        \node (E3) at (4, 2) {$E_3$};

        \node (F) at (2, 3) {F};
        \draw[->] (D1) -> (B);
        \draw[->] (D1) -> (A);
        \draw[->] (E1) -> (D1);
        \draw[->] (E1) edge [bend right=15] (C);

        \draw[->] (D2) -> (A);
        \draw[->] (D2) -> (C);
        \draw[->] (E2) -> (D2);
        \draw[->] (E2) edge [bend right=20] (B);

        \draw[->] (D3) -> (B);
        \draw[->] (D3) -> (C);
        \draw[->] (E3) -> (D3);
        \draw[->] (E3) edge [bend left=15] (A);

        \draw[->] (F) -> (E1);
        \draw[->] (F) -> (E2);
        \draw[->] (F) -> (E3);
      \end{tikzpicture}}
  \end{center}
  Then, $H$ admits no consistent method resolution order as
    computed by \CCC, whichever local precedence orders are chosen. The
    same holds for any linearization algorithm satisfying
    acceptability, monotonicity and consistency with the extended
    local precedence order.
\end{proposition}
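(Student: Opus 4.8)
The plan is to exploit the $3$-fold rotational symmetry of $H$: the permutation $\sigma$ cyclically sending $A\to B\to C\to A$, $D_1\to D_3\to D_2\to D_1$, $E_1\to E_3\to E_2\to E_1$ and fixing $F$ is an automorphism of $H$ (it preserves every covering relation, as one checks directly). Suppose for contradiction that, for some choice of local precedence orders on $H$, there is a consistent MRO $l$. Then $l$ induces a total order on $\{A,B,C\}$, which has a well-defined middle element; the goal is to show that none of $A$, $B$, $C$ can be that middle element, which is absurd. (Note that the local precedence order at $F$ will play no role.)

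The core of the argument is the following claim, to be applied three times: \emph{in $l$, the element $C$ is not strictly between $A$ and $B$}. To prove it, consider $E_1$, whose upper covers are $D_1$ and $C$; the upper covers of $D_1$ are $A$ and $B$, the set of elements lying strictly above $D_1$ in $H$ is exactly $\{A,B\}$, and $A$, $B$, $C$ are pairwise incomparable. Split into two cases according to the local precedence order $l(E_1)$:
\begin{itemize}
\item If $D_1 <_{l(E_1)} C$: apply condition~\ref{compat_local} at $E_1$ to the pair $D_1 <_{l(E_1)} C$, once with $B'=A$ and once with $B'=B$ (both are above $D_1$ and incomparable to $C$); this yields $A<_l C$ and $B<_l C$, so $C$ comes after both $A$ and $B$ in $l$.
\item If $C <_{l(E_1)} D_1$: consistency with the local precedence order at $E_1$ gives $C<_l D_1$, and since $l$ is a linear extension of $H$ with $D_1 <_H A$ and $D_1 <_H B$ we get $C<_l A$ and $C<_l B$, so $C$ comes before both $A$ and $B$.
\end{itemize}
Either way $C$ is not strictly between $A$ and $B$, proving the claim. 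Transporting it by $\sigma$ and $\sigma^2$, which carry the configuration $(E_1,D_1,C)$ to $(E_3,D_3,A)$ and to $(E_2,D_2,B)$, shows that $A$ is not strictly between $B$ and $C$, and that $B$ is not strictly between $A$ and $C$. This contradicts the existence of a middle element of $l$ on $\{A,B,C\}$; hence $H$ admits no consistent MRO, whichever local precedence orders are chosen, and \CCC fails on every such input.

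For the last sentence of the statement, let $\mathcal L$ be any linearization algorithm that is acceptable, monotonic, and consistent with the extended local precedence order, and suppose $\mathcal L$ succeeds on $H$ with some local precedence orders. Its output --- the MRO of $F$ --- is a linear extension of $H$ (by monotonicity: the restriction of this MRO to the super classes of any $x$ is the MRO of $x$, which begins with $x$, so $x$ precedes all its super classes), and, being consistent with the extended local precedence order, it satisfies condition~\ref{compat_local} (the two are equivalent for a linear extension). Thus it is a consistent MRO of $H$, which we have just shown cannot exist; so $\mathcal L$ fails as well.

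The only delicate point I expect is orientation bookkeeping: in this encoding the bottom element $F$ is the most-derived class and an MRO lists classes from derived to base, so ``$x<_H y$'' means ``$x$ precedes $y$ in the MRO''; and in the first bullet one must use that condition~\ref{compat_local} places before $C$ precisely the \emph{super classes} of $D_1$ that are not super classes of $C$ --- here exactly $A$ and $B$ --- and none of the elements below $D_1$. Once the two cases at $E_1$ are set up correctly, the rotational symmetry does all the remaining work and there is no computation to carry out.
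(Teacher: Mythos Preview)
Your proof is correct and follows essentially the same approach as the paper: a case split on the local precedence order at each $E_i$ shows that the ``odd'' maximal element cannot lie between the other two, and the three-fold symmetry then rules out any middle element among $A,B,C$. The only cosmetic difference is that you argue directly from condition~\ref{compat_local} on the full MRO $l$, whereas the paper restricts to the sub-posets $H_i=\{A,B,C,D_i,E_i\}$ and invokes monotonicity to transfer the conclusion back to $H$.
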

\begin{proof}
  Fix a choice of local precedence orders, and assume that there
  exists a consistent MRO for $H$ as computed by \CCC, or any
  linearization algorithm which is acceptable, monotonic and
  consistent with the extended local precedence orders.
  
  Consider the restriction $H_1$ of $H$ on $\{A,B,C,D_1,E_1\}$. Up to
  isomorphism, this is the poset underlying
  Example~\ref{example.C3deviates}.  The MRO on $H_1$ depends solely
  on the local precedence orders at $D_1$ and $E_1$. If the local
  precedence order at $E_1$ is $D_1, C$, then $C$ comes after $A$ and
  $B$ in the MRO; otherwise it comes before $A$ and $B$.  By
  monotonicity, $C$ never lies between $A$ and $B$ in the MRO of $H$.

  The restriction $H_2$ of $H$ on $\{A,B,C,D_2,E_2\}$ is again the poset
  underlying Example~\ref{example.C3deviates}, but this time with the
  roles of $A,B,C$ shifted cyclically. Repeating the previous
  argument, $B$ never lies between $A$ and $C$ in the MRO of $H$.

  The same holds for the restriction $H_3$ of $H$ on $\{A,B,C,D_3,E_3\}$;
  thereby $A$ never lies between $B$ and $C$ in the MRO of $H$, a
  contradiction since one of $A$, $B$, and $C$ must lie between the
  others.

\end{proof}
A systematic computer search revealed no other poset on at most $10$ elements
with this property. So we moreover claim that
\begin{claim}
  Up to isomorphism, $H$ is the unique poset with this property among posets
  having a least element with at most $10$ elements.
\end{claim}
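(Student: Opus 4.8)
The plan is to prove the claim by an exhaustive finite search, reusing verbatim the apparatus built for Proposition~\ref{proposition.noMRO}. Since that proposition already establishes that $H$ has the stated property, only the converse needs work: \emph{no other} poset with a least element and at most $10$ elements has it. First I would record the reduction. Deleting the least element is a bijection between isomorphism classes of posets with a least element and at most $10$ elements, and isomorphism classes of posets with at most $9$ elements; and every poset on $n$ elements occurs, after relabelling along one of its linear extensions, in $\mathcal A_n$. Hence it suffices to run over each $P$ obtained from some $Q\in\mathcal A_n$, $n\le 9$, by adjoining a fresh least element below all of $Q$ -- exactly the family already enumerated.

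Second, I would argue that for each such $P$ it is enough to test the local precedence orders \emph{induced by the linear extensions of $P$}, not every conceivable assignment. By the second Proposition of Section~\ref{section.formal_background}, \CCC succeeds on $(P,\{l(C)\})$ iff some consistent MRO $M$ exists; by the formal definition (and the note following it), such an $M$ is a linear extension of $P$ whose restriction to the upper covers of each element $C$ equals $l(C)$, since consistency with the extended local precedence order entails consistency with the local precedence order. Therefore any assignment $\{l(C)\}$ on which \CCC succeeds is the one induced by the resulting linear extension $M$ (even when $M$ differs from the linear extension one started from, as in Example~\ref{example.C3deviates}). Contrapositively, if \CCC fails on the assignment induced by \emph{every} linear extension of $P$, then $P$ admits no consistent MRO whatsoever. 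So ``$P$ has the property of the claim'' coincides with the Boolean ``\CCC fails on all linear-extension-induced local precedence orders of $P$'', which is precisely what the search records per poset.

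Concretely, I would carry out the recursive traversal of the tree on $\mathcal A$: the children of $Q\in\mathcal A_{n-1}$ are obtained by picking an antichain of $Q$ and adding $n$ as a new upper cover of its elements; at each node $Q$ one builds $P$ by adjoining a least element, enumerates the linear extensions of $P$, runs \CCC on the induced local precedence orders, and reduces to the single bit ``\CCC failed on all of them''. Duplicates up to isomorphism are discarded via canonical labelling, and the computation is distributed with the parallel map--reduce / work-stealing infrastructure~\cite{Sage.RecursivelyEnumeratedSets.MapReduce}. The search returns that, among all posets with a least element and at most $10$ elements, only the canonical form of $H$ carries the property, which is the claim.

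The main obstacle is computational, not mathematical: the range already holds on the order of $10^8$ posets (and far more linear extensions), so the argument is only as trustworthy as the enumeration of the $\mathcal A_n$ and of linear extensions, the isomorph rejection, and the reduction code. To reinforce confidence one would reimplement independently, check the boundary cases by hand (that $H$ has the property -- done in Proposition~\ref{proposition.noMRO} via the threefold cyclic embedding of Example~\ref{example.C3deviates} -- and that posets of small size always admit a consistent MRO), and cross-validate the per-size success/failure tallies against the poset counts (OEIS A006455~\cite{OEIS.A006455}). A fully hand-verifiable proof appears out of reach at this scale.
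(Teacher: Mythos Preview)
Your proposal is correct and follows essentially the paper's approach: the claim is established by the same exhaustive computer search over $\mathcal A_n$ for $n\le 9$, and you in fact spell out more carefully than the paper the key reduction that it suffices to test only those local precedence orders induced by linear extensions. One minor redundancy: separately enumerating the linear extensions of $P$ at each node $Q\in\mathcal A$ is unnecessary, since distinct nodes of $\mathcal A$ sharing the same underlying unlabeled poset already account for its distinct linear extensions, and the canonical-labelling step then collates the success/failure bits per isomorphism class.
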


\section{\CCC under control}
\label{section.C3_under_control}

\subsection{Taking over control}

It is now time to unveil a trivial yet decisive remark to break out of
the apparent dead end for using \CCC in our use case.
\begin{remark}
  Example~\ref{example.C3deviates} and
  Proposition~\ref{proposition.noMRO} rely on an unspoken hypothesis:
  that the local precedence order at a given class lists only its
  direct super classes.

  As pointed out in~\cite{C3.1996}, this is in fact not required by
  the \CCC algorithm.  This is not required either by the
  implementation of classes in, e.g., Python.
\end{remark}

\begin{example}
  \label{example.C3fixed}
  Consider the following minor variant of
  Example~\ref{example.C3deviates}:
  \begin{center}
    \fbox{\begin{tikzpicture}[yscale=-1]
        \node (A) at (0, 0) {class $B$};
        \node (B) at (2, 0) {class $A$};
        \node (C) at (4, 0) {class $C$};
        \node (D) at (0, 1) {class $D(B,A)$};
        \node (E) at (0, 2) {class $E(D,C\red{,B})$};
        \draw[->] (D) -> (B);
        \draw[->] (D) -> (A);
        \draw[->] (E) -> (D);
        \draw[->] (E) -> (C);
      \end{tikzpicture}}
  \end{center}
  Adding $B$ to the local precedence order for $E$ does not change the
  underlying poset -- that is the semantic of the class inheritance --
  since the edge $E\rightarrow B$ is obtained by transitivity.
  Yet adding this additional
  bit of information about the global order is sufficient to make the
  algorithm \CCC produce the desired MRO $E,D,C,B,A$.
\end{example}

This immediately brings the next question:
\begin{question}
  \label{question}
  Take a poset and a global order (linear extension) for that
  poset. Which elements should be inserted in the local precedence
  order (sorted according to the global order) to ensure that \CCC
  reproduces the given global order as MRO.
\end{question}
\begin{exercise}
  \label{exercise}
  Resolve that question for the poset $H$ of
  Proposition~\ref{proposition.noMRO}.
\end{exercise}

A brute force solution is to choose, for each element $C$, the desired
MRO for that element as local precedence order. Consider indeed the
execution of \CCCmerge to compute the MRO of a given
element. By construction, the first input list is the MRO for that
element, and by induction all input lists follow the global order. It
is easy to check that, at each step of the execution, the head of the
first list is always good; therefore the result is the desired MRO.

This solution is however not desirable. Consider for example a linear
order $A_n<\dots< A_1$. Then, the local
precedence order for $A_k$ is $A_k,\dots,A_1$, and the complexity of
computing the MRO with \CCC is cubic in $n$. When $n$ is of the order of
magnitude of $1000$, this becomes non-negligible. This theoretical
evidence is confirmed by practice: our first prototype in SageMath
used this brute force approach; the slowdown was noticable not only at
class construction, but also for many other operations (like
introspection) involving a recursive exploration of the class
hierarchy.

\subsection{Automation}

After carrying out Exercise~\ref{exercise}, the reader is presumably
convinced that resolving Question~\ref{question} by hand for each
class is certainly not practical, even for hierarchy of classes of
moderate size. As suggested in the discussion p. 75 of~\cite{C3.1996}
this form of manual tuning is fragile. It also introduces redundancy
which is subject to changes each time the class hierarchy evolves.

We thus take an alternative route, considering the local precedence
orders as technical details that should be computed automatically.

\begin{algorithm}{\bf[Instrumented \CCC\!\!]}

  \CCCinstrumented takes the same input as \CCC, together
  with the desired global order. The algorithm proceeds as in \CCC.
  However, \CCCmerge is instrumented as well so that, when
  computing the MRO for a element $C$:
  \begin{itemize}
  \item The local precedence order $c_C$ is initialized with the
    direct super classes of $C$, sorted decreasingly according to the
    global order.
  \item At each step, if the search for a good head results in a
    element $A$ which is distinct from the desired next element $B$ in
    the MRO, then $A$ is inserted in both the last list and $c_C$; if
    required, $B$ is inserted as well to ensure that $A$ is in the
    tail of the last list. Then the search for a good head is
    restarted until it results in $B$.
  \end{itemize}
  At the end, \CCCinstrumented returns all the computed
  local precedence orders $\{l(C) \mid C\in P\}$.
\end{algorithm}

The greedy algorithm \CCCinstrumented resolves
Question~\ref{question} optimally:
\begin{proposition}
  Take a poset and a global order (linear extension) for that
  poset. Use \CCCinstrumented to compute the local precedence
  orders. Then, these local precedence orders are the minimal ones (in
  size) such that \CCC reproduces the given global order as MRO.
\end{proposition}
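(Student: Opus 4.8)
The plan is to prove two things. \textbf{(a) Soundness}: running plain \CCC{} with the family $\{l(C)\}$ returned by \CCCinstrumented{} reproduces the prescribed global order $g$ as MRO. \textbf{(b) Minimality}: for \emph{any} family $\{l'(C)\}$ with which plain \CCC{} reproduces $g$, one has $l(C)\subseteq l'(C)$ as sets for every $C$ — so $\{l(C)\}$ is not merely of minimal size but is the unique smallest valid family. Both rest on one preliminary reduction: by the cited uniqueness of a consistent MRO, any valid family forces the MRO of every element $C$ to be $g$ restricted to the up-set $\uparrow C$; hence, for each upper cover $D_i$ of $C$, the recursively computed MRO is $g|_{\uparrow D_i}$ \emph{regardless} of which valid family is used, and everything reduces, element by element, to understanding for which last lists $T$ the call $\CCCmerge(g|_{\uparrow D_1},\dots,g|_{\uparrow D_k},T)$ outputs $g|_{\uparrow C}$. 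Moreover, consistency with the local precedence order forces any valid $T$ to be $g$-sorted and to contain all upper covers of $C$, so $T$ is determined by a set $S$ with $\mathrm{covers}(C)\subseteq S\subseteq \uparrow C\setminus\{C\}$ and ``size'' means $|S|$; the list returned by \CCCinstrumented{} at $C$ is of this form by construction.

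The combinatorial engine is a single observation about one step of \CCCmerge{} in this situation. Suppose the elements output so far are exactly the length-$t$ prefix of $g|_{\uparrow C}$, and let $B$ be the next element of $g|_{\uparrow C}$. Since every list here is a sublist of $g|_{\uparrow C}$, and every element of $\uparrow C$ strictly $g$-below $B$ has already been output, (i) $B$ is the pending head of \emph{every} input list in which it occurs and is blocked by no list, hence $B$ is \emph{good}; since every proper super class of $C$ lies above some upper cover $D_i$, $B$ occurs in at least one of the fixed lists $g|_{\uparrow D_i}$. Consequently (ii) \CCCmerge{} outputs $B$ at this step \emph{unless} some spurious element $A>_g B$ is the pending head of an input list preceding, in scan order, an input list having $B$ as head, and $A$ is good. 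Finally (iii) ``goodness'' is \emph{antitone} in the last list: enlarging $T$ only moves elements into tails, so it only removes good heads, never creates one, and in particular never un-goods $B$.

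Soundness then follows by induction over $P$ (processing each element after its upper covers), coupling the \CCCinstrumented{} run computing the MRO of $C$ with the plain-\CCC{} run that uses the recorded last list $l(C)$. By the forced-restart invariant of \CCCinstrumented{} (which terminates because each restart inserts a genuinely new element into $c_C$, worst case reaching the full ``brute force'' list already known to work), the instrumented run outputs $g|_{\uparrow C}$ and, at step $t$, inserts into $c_C$ every spurious good head it meets, plus $B$ when needed to push that head into the tail. By (i)--(iii), at step $t$ of the plain run $B$ is good; any spurious head $A$ the plain run might prefer was, in the instrumented run at that step, either blocked by a fixed list (identical here), or blocked by $c_C$ at that moment (hence by $l(C)\supseteq c_C$ here), or met as a spurious good head and therefore inserted into $l(C)$ (hence blocked here); so the plain run also outputs $B$, and both runs produce $g|_{\uparrow C}$. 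For minimality, run \CCCinstrumented{} and plain \CCC{}-with-$\{l'(C)\}$ side by side, maintaining the invariant that every element inserted into $c_C$ so far lies in $l'(C)$ (true initially since $l'(C)$ lists the upper covers, and the two runs share the same fixed-list state at the start of each step by (a) applied to the restriction and by validity of $\{l'(C)\}$). When \CCCinstrumented{} inserts a spurious good head $A$: since $c_C$ is the last list and $B$ is good, $A$ must be the pending head of a fixed list preceding $B$'s list in scan order; in the $\{l'(C)\}$-run that same fixed list still has head $A$, so validity (which demands $B$ next) forces $l'(C)$ to block $A$, whence $A\in l'(C)$. When \CCCinstrumented{} additionally inserts $B$ to push $A$ into the tail: $l'(C)$ must block $A$ by some pending element $g$-below $A$, and the $g$-minimum such element $z^\ast$ is then the pending head of $l'(C)$ and of some fixed list $g|_{\uparrow D_i}$, hence good and scanned before the last list, so validity forces $z^\ast=B$, whence $B\in l'(C)$. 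Either way the inserted elements lie in $l'(C)$, the invariant persists, and $l(C)\subseteq l'(C)$.

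The main obstacle I anticipate is not a single deep idea but the bookkeeping of the two-runs coupling: keeping the two merges in exactly the same global state at the start of each step (which rests on uniqueness of the consistent MRO and the induction over $P$), correctly threading the inner restart loop and the conditional insertion of ``$B$'' in \CCCinstrumented{}, and nailing the ``descending chain / scan order'' case analysis of clause (ii) so that every inserted element is provably forced. Getting a clean statement and proof of the one-step lemma (i)--(iii) is precisely what makes the remainder routine.
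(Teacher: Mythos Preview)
The paper's own ``proof'' is the two words \emph{By construction}, so you are supplying an argument where the paper offers none. Your soundness half is essentially correct: observations (i)--(iii) are right, and the coupling of the instrumented run with the plain run (using that the final $l(C)$ contains every intermediate $c_C$) cleanly shows that plain \CCC{} with the recorded $l(C)$ reproduces $g|_{\uparrow C}$.

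The minimality half has a genuine gap at the step ``$B\in l'(C)$''. You assert that the head $z^\ast$ of $l'(C)$ is also the pending head of some fixed list $g|_{\uparrow D_i}$ and hence good; but this is unjustified: $z^\ast$ lies in some $\uparrow D_i$, yet the head of that list may sit strictly $g$-below $z^\ast$. Worse, the gap is not repairable, because the claim is false. Take $C$ with upper covers $D_1,D_2$; let $D_1$ cover $X$, $X$ cover $Y$, and $D_2$ cover $Z$ and $W$; choose the global order $C,D_1,D_2,Z,W,X,Y$. Running \CCCinstrumented{} as described, $X$ is inserted at step~1; at step~2 the spurious good head is again $X$ (already in $c_C$, now its head), so the algorithm inserts $B=Z$; at step~3 the same happens and it inserts $B=W$. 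Hence $l(C)=[D_1,D_2,Z,W,X]$. But $l'(C)=[D_1,D_2,W,X]$ is also valid: here $z^\ast=W$ blocks $X$ at \emph{both} steps~2 and~3, and one checks directly that \CCC{} with $l'(C)$ outputs $C,D_1,D_2,Z,W,X,Y$. So $Z\in l(C)\setminus l'(C)$, killing your subset claim, and in fact $|l'(C)|<|l(C)|$, so size-minimality itself fails for the algorithm as literally specified. The underlying issue is that inserting the current $B$ is a greedy one-step fix, whereas inserting a single later element can block the same $A$ across several consecutive steps; your argument implicitly assumes these coincide.
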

\begin{proof}
  By construction.
\end{proof}

This procedure -- which admits a natural incremental implementation --
has been adopted by SageMath 5.12 in 2013~\cite{SageMath.13589}: at
runtime, whenever the class hierarchy is about to be extended with a
new class, the local precedence order for that class is computed using
\CCCinstrumented, and passed as list of \emph{bases} to Python's class constructor
which uses \CCC to build its MRO as usual. In addition a key is computed
for that class that defines its position in the global order (more on
this later) for later MRO computations.

\subsection{Performance overhead}

At this stage it is natural to assess the performance overhead of this
procedure. The first one is that the \CCC algorithm is called twice,
once instrumented, once not. The second one is that the complexity of
both calls may be increased by the insertion of new elements in the
local precedence orders.

\begin{problem}
  Estimate the number of additional elements that are inserted in the
  local precedence orders by \CCCinstrumented.
\end{problem}
We have no theoretical bounds. However the two upcoming pieces of
practical evidence and practical experience suggest that few
additional elements need to inserted in the local precedence orders,
and that the complexity of running \CCCinstrumented followed
\CCC is commensurate to two calls of the original
\CCC.

\begin{example}{Solution to Exercise~\ref{exercise}}
  Let us choose $F, E_3, E_2,E_1,D_3,D_2,D_1,C,B,A$ as global
  order. The picture below depicts the local precedence orders
  computed by \CCCinstrumented. It also shows in red, the four elements that have
  been inserted to ensure that \CCC reproduces the desired global
  order.
  \begin{center}
    \fbox{\begin{tikzpicture}[yscale=-1,xscale=1.3]
        \node (A) at (0, 0) {$A$};
        \node (B) at (2, 0) {$B$};
        \node (C) at (4, 0) {$C$};

        \node (D1) at (0, 1) {$D_1(B, A)$};
        \node (E1) at (0, 2) {$E_1(D_1,C\red{,B})$};

        \node (D2) at (2, 1) {$D_2(C,A)$};
        \node (E2) at (2, 2) {$E_2(D_2,B\red{,A})$};

        \node (D3) at (4, 1) {$D_3(C,B)$};
        \node (E3) at (4, 2) {$E_3(D_3,A)$};

        \node (F) at (2, 3) {$F(E_3,E_2,E_1\red{,D_3,D_2})$};
        \draw[->] (D1) -> (B);
        \draw[->] (D1) -> (A);
        \draw[->] (E1) -> (D1);
        \draw[->] (E1) edge [bend right=15] (C);

        \draw[->] (D2) -> (A);
        \draw[->] (D2) -> (C);
        \draw[->] (E2) -> (D2);
        \draw[->] (E2) edge [bend right=55] (B);

        \draw[->] (D3) -> (B);
        \draw[->] (D3) -> (C);
        \draw[->] (E3) -> (D3);
        \draw[->] (E3) edge [bend left=15] (A);

        \draw[->] (F) -> (E1);
        \draw[->] (F) -> (E2);
        \draw[->] (F) -> (E3);
      \end{tikzpicture}}
  \end{center}
  If instead one chooses $F, E_3,D_3,E_2,D_2,C,E_1,D_1,B,A$, a single
  element needs to be added:
  \begin{center}
    \fbox{\begin{tikzpicture}[yscale=-1,xscale=1.3]
        \node (A) at (0, 0) {$A$};
        \node (B) at (2, 0) {$B$};
        \node (C) at (4, 0) {$C$};

        \node (D1) at (0, 1) {$D_1(B, A)$};
        \node (E1) at (0, 2) {$E_1(C,D_1)$};

        \node (D2) at (2, 1) {$D_2(C,A)$};
        \node (E2) at (2, 2) {$E_2(D_2,B\red{,A})$};

        \node (D3) at (4, 1) {$D_3(C,B)$};
        \node (E3) at (4, 2) {$E_3(D_3,A)$};

        \node (F) at (2, 3) {$F(E_3,E_2,E_1)$};
        \draw[->] (D1) -> (B);
        \draw[->] (D1) -> (A);
        \draw[->] (E1) -> (D1);
        \draw[->] (E1) edge [bend right=15] (C);

        \draw[->] (D2) -> (A);
        \draw[->] (D2) -> (C);
        \draw[->] (E2) -> (D2);
        \draw[->] (E2) edge [bend right=55] (B);

        \draw[->] (D3) -> (B);
        \draw[->] (D3) -> (C);
        \draw[->] (E3) -> (D3);
        \draw[->] (E3) edge [bend left=15] (A);

        \draw[->] (F) -> (E1);
        \draw[->] (F) -> (E2);
        \draw[->] (F) -> (E3);
      \end{tikzpicture}}
  \end{center}
  Note that the fact that all additional elements appear at the end of the local
  precedence orders is an artifact of the given choice of total order.
  
  Consider now all 720 linear extensions of the poset. The following
  table counts them according to the number of elements that need to
  be inserted in the local precedence orders in addition to the
  original 15 elements:
  \begin{center}
    \begin{tabular}{|l|r|r|r|r|r|}\hline
      \# additional elements & 1 & 2 & 3 & 4 & 5\\\hline
      \# linear extensions & 36 & 108 & 180 &216 & 180\\\hline
    \end{tabular}
  \end{center}
\end{example}

\begin{example}
  Consider the 103 common categories in SageMath 9.3 taken from the
  \lstinline{category_sample} catalog. The following table counts them
  according to the number of elements that need to be inserted in
  their local precedence order:
  \begin{center}
    \begin{tabular}{|l|r|r|r|r|r|}\hline
      \# additional elements & 0 & 1 & 2 & 5\\\hline
      \# categories & 96 & 5 & 1 & 1\\\hline
    \end{tabular}
  \end{center}
\end{example}

\subsection{Choosing a global order}

We have assumed so far that, in use cases where
Convention~\ref{convention} is acceptable, it is possible to define a
global order on the class hierarchy. To illustrate that this is not
necessarily a strong assumption we conclude this section by a brief
description of the strategy used in SageMath to define such a
global order.

Under Convention~\ref{convention}, the semantic should not depend on
the global order as long as it is a linear extension of the class
hierarchy. Therefore, a naive strategy is to exploit the fact that
classes are always constructed from existing super classes, and use
the creation order as global order (in practice, use a global counter
and assign to each class an integer successively, integer which is
used as comparison key to define the global order).

This strategy is non-deterministic however: the obtained global order
is subject to change whenever the code is modified; it may even vary from one
session to the other depending on the order in which the code is
loaded. This behavior is unusual and thus surprising to developers; it
also makes it difficult to reproduce and analyze bugs.

To make the strategy more deterministic and regain -- as often
desirable -- some coarse control on the MRO, we refined this strategy
by exploiting that the hierarchy of categories underlying SageMath's
abstract classes forms a lattice. A total order is chosen on a
collection $\mathcal C$ of important categories of the lattice (about
twenty of them as of SageMath 9.3, most being meet-irreducible). Each
category in $\mathcal C$ is assigned a distinct power of two (seen as
bit flag) following this total order. Then each category is assigned
as comparison key the sum of the flags of its super categories in
$\mathcal C$; the obtained order is then
refined lexicographically to a total order using a counter as above.

\section{Conclusion}

The \CCC linearization algorithm was designed around a collection of
generally desirable properties which makes it suitable for a wide
range of use cases. This comes at a price: it may fail, and these
failures can become a maintenance burden that prevents scaling to
large hierarchies of classes.

In this paper, we provided theoretical evidence for this scaling issue
by exhibiting a hierarchy of 10 classes for which \CCC always fails.

We showed that this can be circumvented with a small relaxation of the
hypotheses: at the price of adding a bit of redundancy in the defining
relation of the class hierarchy (adding a few transitivity edges to
the Hasse diagram of the poset), one may take control over \CCC and
guide it to any desired solution. Better, this process may be
automatized to produce the optimal number of additional edges, with
minimal impact on the computational complexity.

This resolves the scalability limitation of \CCC and guarantees
extensibility in use cases -- like in SageMath -- where one can afford
to drop the acceptability property and define a global order on the
class hierarchy. This solution has been in continuous production in
SageMath since its original implementation in 2013 by the second
author. Other than speed optimizations in 2013 and 2014 it has
required no further attention. For almost a decade now it has resolved
MRO issues in a large variety of computational applications, without
the users or programmers even noticing it (there remains MRO issues in
SageMath, but due to another, independent, source of
inconsistencies). This suggests not only that the proposed solution is
robust, but that the underlying approach is sound to tackle large
extensible hierarchies of classes.

This also confirms the flexibility of \CCC which, by design, satisfies
the desirable properties of linearization in general use cases, and,
with some control, can be made to satisfy other desirable properties
arising in specific use cases.

\begin{acknowledgements}
  The authors would like to thank the SageMath community for their
  feedback on and review of the implementation of \CCC under control
  in SageMath, and in particular Simon King who optimized the
  implementation in Cython~\cite{SageMath.13589} to achieve the same
  constant time factors as in Python's native \CCC implementation.
\end{acknowledgements}

\section*{Conflict of interest}

The authors declare that they have no conflict of interest.

\section*{Data availability}

The manuscript has no associated data, as the data underlying the
computer search was generated on the fly. The code is available in the
following public repository
\url{https://gitlab.dsi.universite-paris-saclay.fr/nicolas.thiery/C3/}.

\bibliographystyle{spmpsci}      
\bibliography{main}   

\end{document}